\newcommand{\ie}{\hbox{\it i.e.\ }}
\newlength\fullwidth
\numberwithin{equation}{section}
\DeclareMathSymbol{\leqslant}{\mathalpha}{AMSa}{"36} % nicer `smaller or equal'
\DeclareMathSymbol{\geqslant}{\mathalpha}{AMSa}{"3E} % nicer `larger or equal'
\DeclareMathSymbol{\eset}{\mathalpha}{AMSb}{"3F}     % nicer `emptyset'
\renewcommand{\leq}{\;\leqslant\;}                   % redef. of < or =
\renewcommand{\geq}{\;\geqslant\;}                   % redef. of > or =
\newcommand{\1}{\mathds{1}}
\newcommand{\var}{\operatorname{Var}}
\newcommand{\D}{\Delta}
\renewcommand{\l}{\lambda}
\renewcommand{\L}{\Lambda}
\renewcommand{\l}{\lambda}
\renewcommand{\a}{\alpha}
\renewcommand{\d}{\delta}
\renewcommand{\t}{\tau}
\newcommand{\g}{\gamma}
\newcommand{\e}{\varepsilon}
\renewcommand{\O}{\Omega}
\newcommand{\tc}{\thinspace |\thinspace}
\newtheorem{theorem}{Theorem}[section]
\newtheorem{lemma}[theorem]{Lemma}
\newtheorem{remark}[theorem]{Remark}
\newtheorem{definition}[theorem]{Definition}
\newcommand{\N}{\mathbb N}
\newcommand{\cA}{\ensuremath{\mathcal A}}
\newcommand{\cC}{\ensuremath{\mathcal C}}
\newcommand{\cD}{\ensuremath{\mathcal D}}
\newcommand{\cG}{\ensuremath{\mathcal G}}
\newcommand{\cH}{\ensuremath{\mathcal H}}
\newcommand{\cK}{\ensuremath{\mathcal K}}
\newcommand{\cL}{\ensuremath{\mathcal L}}
\newcommand{\cP}{\ensuremath{\mathcal P}}
\newcommand{\cU}{\ensuremath{\mathcal U}}
\newcommand{\bbE}{{\ensuremath{\mathbb E}} }
\newcommand{\bbP}{{\ensuremath{\mathbb P}} }
\newcommand{\bbR}{{\ensuremath{\mathbb R}} }
\newcommand{\bbZ}{{\ensuremath{\mathbb Z}} }
\newcommand{\E}{\mathbb{E}}
\let\a=\alpha    \let\d=\delta  \let\e=\varepsilon
 \let\g=\gamma \let\h=\eta      \let\l=\lambda
          \let\p=\pi  
  \let\s=\sigma \let\t=\tau   
\let\D=\Delta     \let\L=\Lambda 
\let\O=\Omega      
\renewcommand{\le}{\leq}
\newcommand{\be}{\mathbf{e}}
\newcommand{\Tmix}[1]{{T_{\textrm{mix}}^{(#1)}}}
\newcommand{\Ep}[2]{\bbE_{#1}\left[#2\right]}  % Expectation
\newcommand{\fc}{\mathbf{1}} % constant 1 function
\newcommand{\Var}{{\rm Var}}
\title{Mixing time bounds for oriented kinetically constrained spin models}
\author[P. Chleboun]{P. Chleboun$^1$}
 \address{$^1$Dip. Matematica,  Universit\`a  Roma Tre, Largo S.L.Murialdo 00146, Roma, Italy.}
\email{paul@chleboun.co.uk}
\author[F. Martinelli]{F. Martinelli$^1$}
\email{martin@mat.uniroma3.it}
\date{}
\thanks{\sl Work supported by the European Research Council through the ``Advanced
Grant'' PTRELSS 228032.}
\begin{document}

\begin{abstract}
We analyze the mixing time of a class of oriented kinetically
constrained spin models (KCMs) on a $d$-dimensional lattice of $n^d$
sites. A typical example is the North-East model, a $0$-$1$ spin
system on the two-dimensional integer lattice that evolves according
to the following rule: whenever a site's southerly and westerly
nearest neighbours have spin $0$, with rate one it resets its own spin
by tossing a $p$-coin, at all other times its spin remains
frozen. Such models are very popular in statistical physics because,
in spite of their simplicity, they
display some of the key features of the dynamics of real glasses. We
prove that the mixing time is
$O(n \log n)$ whenever the relaxation time is $O(1)$.  
Our study was motivated by the ``shape'' conjecture put forward by
G. Kordzakhia and S.P. Lalley.\\

\noindent \textbf{Keywords:} North-East model, kinetically constrained spin models,
  mixing time.

\noindent
\textbf{AMS subject classifications:} 60J10,60J27,60J28.
\end{abstract}

\maketitle

\section{Introduction}
Kinetically constrained spin models (KCMs) are interacting
$0$-$1$ particle systems, on general graphs, which evolve with a simple Glauber dynamics
described as follows. At every site $x$ the system tries to update the occupancy variable (or spin) at
$x$ to the value $1$ or $0$ with probability $p$ and $q$
respectively. However the
update at $x$ is accepted only if the \emph{current} local
configuration satisfies a certain constraint, hence the models are ``kinetically
constrained''. It is always assumed that the constraint at site $x$ does
not to depend on the spin at $x$ and therefore the product
Bernoulli($p$) measure $\pi$ is the reversible measure.  Constraints
may require, for example, that a certain \emph{number} of the
neighbouring spins are in state $0$, or more restrictively, that certain
\emph{preassigned} neighbouring spins are in state $0$ (e.g. the
children of $x$ when the underlying graph is a rooted tree). 

The main interest in the physical literature for KCMs (see e.g. 
\cite{Ritort} for a review) stems from the
fact  that they display many key dynamical
features of real glassy materials: ergodicity breaking
transition at some critical value $q_c$, huge relaxation time for $q$
close to $q_c$, dynamic heterogeneity (non-trivial spatio-temporal fluctuations of the
local relaxation to equilibrium) and aging, just to mention a
few. Mathematically, despite their simple definition, KCMs pose
very challenging and interesting problems because of the hardness of
the constraint, with ramifications towards bootstrap percolation
problems \cite{Spiral}, combinatorics \cite{CDG,Valiant:2004cb},
coalescence processes \cite{FMRT-cmp,FMRT} and  random walks on
upper triangular matrices \cite{Peres-Sly}. Some of the mathematical tools
developed for the analysis of the relaxation process of KCMs
\cite{CMRT} proved to
be quite powerful also in other contexts such as card shuffling problems
\cite{Bhatnagar:2007tr} and random evolution of surfaces \cite{PietroCaputo:2012vl}.

In this paper we focus on \emph{oriented} KCMs on a $d$-dimensional
lattice, $d\geq 2$, with $n^d$ sites, in particular on their mixing
time. A prototypical model belonging to the above class
of KCMs is the North-East model in two dimensions (see
e.g. \cite{Kordzakhia:2006} and \cite{CMRT}) for which
the constraint at any given site $x$ requires the south \emph{and} west neighbours
of $x$ to be empty in order for a flip at $x$ to occur. In order to
avoid trivial irreducibility issues the south-westerly most
spin is unconstrained and sites outside the upper quadrant are treated
as fixed zeros.

With $p_c$ the 
percolation threshold for oriented percolation in two dimensions (see
e.g. \cite{Durrett:1984tm}), it was 
proved in \cite{CMRT} that for all $p<p_c$ the relaxation time  of the North-East process
is $O(1)$ while it becomes $\O(e^{cn})$\footnote{We recall that
  $f=\O(g)$ if $|f|\ge c g$ for some $c>0$. }, for some $c>0$, when $p>p_c$. At $p=p_c$ the relaxation time is expected to have a poly$(n)$
growth. Consider now the North-East model in the first
quadrant of $\bbZ^2$. In
\cite{Kordzakhia:2006} it was conjectured that, for $p<p_c$ and
starting from all $1$'s, the
\emph{influence region} $R_t$, defined as the union of all unit
squares around those sites which
have flipped at least once by time $t$, has a definite limiting shape $S\subset \bbR^2$ in the
sense that a.s.  $\frac{R_t}{t}\to S$ as $t\to \infty$. Since the
North-East process is neither monotone or additive (see
\cite{Liggett1}), the usual tools to prove a
shape theorem do not apply in this case. 

The above conjecture implies that, for $p<p_c$, the influence coming
from the unconstrained spin at the South-West corner
propagates at a definite linear rate as it does in the East model \cite{Blondel:2012wb}, the
one dimensional analog of the model (for background 
see \cite{Aldous,CMRT,SE1}). In particular the mixing time of
the model should grow linearly in $n$ (the linear size of the system). However in 
dimension $d\ge 2$ the analysis of the propagation of influence is quite delicate because of
the many paths along which it can occur (see \cite{Valiant:2004cb}
for combinatorial results in this direction).

In this paper we prove that the mixing time is $O(n\log n)$ as long as
the spectral gap of the process 
is $\O(1)$. Our technique bares some similarities to those
employed in \cite{Caputo:2011dv} to analyse the Glauber dynamics of
biased plane partitions.
% , easily applies also to the upper triangular matrix
% walk (\cite{Peres-Sly} and references therein) to give a $O(n\log n)$
% bound to be compared to the more refined one $O(n)$ recently proved in \cite{Peres-Sly}. 

\section{Models and Results}
\subsection{Setting and notation}
We consider a class of $0$-$1$ interacting particle systems on
finite subsets $\L$ of the integer lattice $\bbZ^d$, reversible with respect to the
product measure $\p := \prod_{x\in\L}\p_x$, where $\pi_x$ is the
Bernoulli$(p)$ measure. 

The $d$-dimensional cube of linear size $n$ (which contain $n^d$
points) will be
denoted by
\begin{align*}
  \L_n := \left( [1,n] \times \ldots \times [1,n] \right) \cap \bbZ^d
  \,.
\end{align*}
The standard basis vectors in $\bbZ^d$ are denoted
$\be_1=(1,\ldots,0)$, $\be_1=(0,1,\ldots,0),\ldots$,
$\be_d=(0,\ldots,1)$. For $x \in \L_n$ we write $x_j$ for the component of $x$ in the
direction $\be_j$.

The set of probability measures on the finite state space $\O_n=\{0,1\}^{\L_n}$ is
denoted by $\cP(\O_n)$. Elements of $\O_n$ will be denoted by the
small greek letters $\s,\h,\ldots$ and $\s_x$ will denote the spin at
the vertex $x$. 

We denote the $i$-th level hyperplane in $\L_n$ by $\cH_i = \{ x \in
\L_n : \sum_{j=1}^{d}x_j = i\}$ (see Fig. \ref{fig:Geom}). 
The set of sites on and below this hyperplane will be written  $\cU_i = \{x \in \L_n : \sum_{j=1}^{d}x_j \leq i \}$.
For each $\sigma \in \O_n$ we
write $\sigma^{(i)}$ for the restriction of $\sigma$ to $\cU_i$, and
$\s_{\cH_i}$ for the restriction of $\s$ to $\cH_i$.
Similarly, for any probability measure $\nu\in \cP(\O_n)$, we write $\nu^{(i)}$ for the marginal of
$\nu$ on $\O^{(i)}:=\{0,1\}^{\cU_i}\subset \O_n$.

For any vertex $x\in \L_n$ we also let (see Fig. \ref{fig:Geom} Right)
\begin{align*}
  \cK_x^* &= \Big\{ y \in \bbZ^d : y = x - \sum_{i=1}^{d}\a_i\be_i, \
  \a_i \in \{1,0\} \Big\}\setminus \{ x\}\\
 \cK_x &= \Big\{ y \in \bbZ^d : \exists\  i=1,\dots ,d \text{ such that }y = x - \be_i\Big\}\,.
\end{align*}
\begin{figure}[t]
  \centering
  \mbox{\subfloat{\vtop{%
  \vskip0pt
  \hbox{\includegraphics[width=0.48\textwidth]{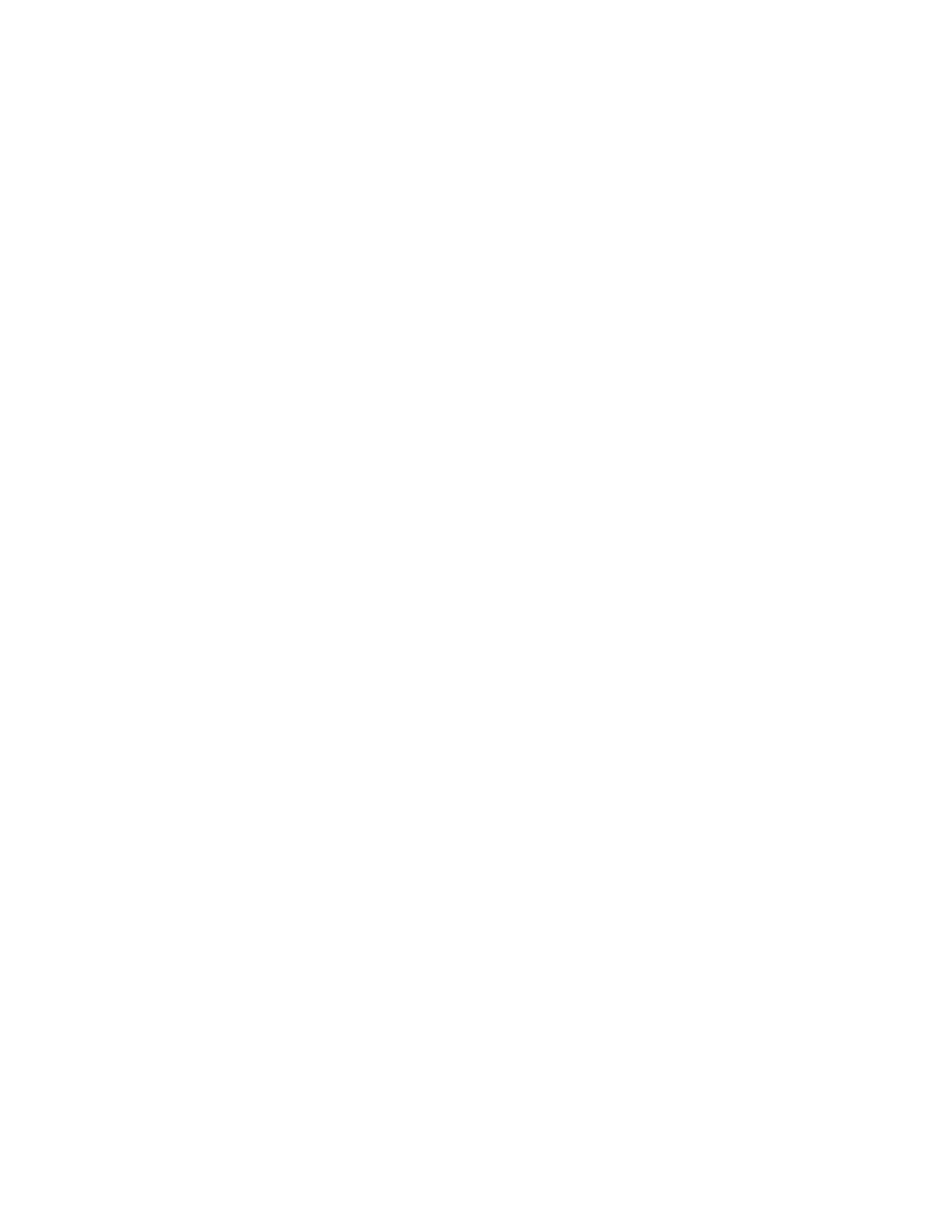}}}}\quad \quad 
    \subfloat{\vtop{%
  \vskip20pt
  \hbox{\includegraphics[width=0.18\textwidth]{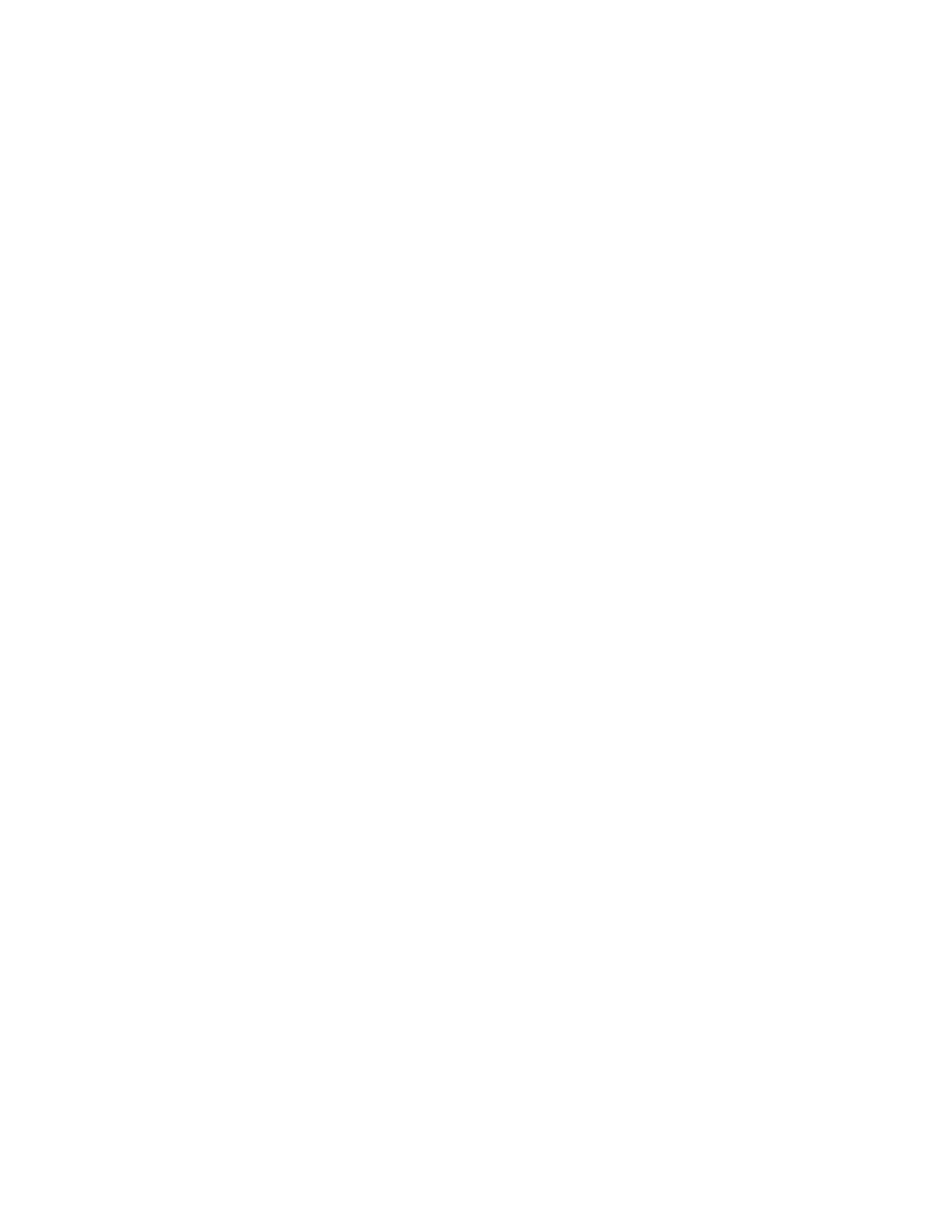}}}}}
  \caption{\label{fig:Geom} Left: The  the lattice $\Lambda_n$ in two
    dimensions, $d=2$. The sets $\cH_i$ and  $\cU_{i-1}$
    are shown in dark grey and light grey respectively. Right: The constraining neighbourhoods,
    $\cK_x$ and $\cK_x^*$, of  a vertex $x$ in two dimensions, shown
    in dark grey.}
\end{figure}
\begin{definition}[constraints]
Let $x_*=(1,\dots,1)$ be the south-west corner of $\L_n$. Consider a
collection $\{\cC_x\}_{x\in \L_n}$ of 
\emph{constraining
  neighborhoods} such that 
\begin{align*}
\cC_{x_*}=\emptyset,\qquad\text{and}\qquad 
\emptyset \neq
\cC_x\subset K_x^*\cap \L_n, \quad x\neq x_*.  
\end{align*}
Let 
\begin{align}
  \label{eq:constraint}
  c_x(\s) : =
  \begin{cases}
\prod_{y\in\cC_x}(1-\s_y)
&\text{if $x\neq x_*$}\\
1 &\text{$x=x_*$.}
  \end{cases}
\end{align}
We then say that
the constraint at site $x$ is satisfied by the configuration
$\s$ if $c_x(\s)=1$. In words, the constraint at $x$ is
satisfied if all the spin in $\cC_x$ are $0$. Note that $x_*$ is
unconstrained.
\end{definition}
% \begin{remark}
% Notice that, for any $x\in \L_n$ such that $x_j=1$ for some
% $j=1,\dots, d$, the neighborhoods $\cK^*_x\cap \L_n\subset \cK^*_x$ so
% that the constraint
% at any such
% vertex requires, in general, fewer $0$-spins around it. In particular
% the corner $(1,\dots,1)$ is unconstrained. Graphically
% we could imagine that the configuration $\s$ has been extended to the whole
% lattice by setting $\s_y=0$ if $y\notin \L_n$ and the constraining set $\cC_x$ is a subset of $\cK_x^*$.   
% \end{remark}
\subsection{Oriented KCMs and main result}
\label{sec:graphical}
We give a general definition of the process to include a large class
of directed KCMs, such as the North-East model and higher
dimensional analogues. For constraining neighborhoods
$\{\cC_x\}_{x\in \L_n}$ we define the
associated directed KCM by the following graphical construction.  To
each $x\in\L_n$ we associate a mean one Poisson process and,
independently, a family of independent Bernoulli$(p)$ random variables
$\{s_{x,k} : k \in \N\}$. The occurrences of the Poisson process
associated to $x$ will be denoted by $\{t_{x,k} : k \in \N\}$.  We
assume independence as $x$ varies in $\L_n$. The probability
measure will be denoted by $\bbP^{\L_n}$. Notice that $\bbP^{\L_n}$-almost surely all
the occurrences $\{t_{x,k}\}_{k\in\N, x\in\L_n}$ are different.

Given $\h \in \O_n$ we construct a continuous time Markov chain
$\{\h(s)\}_{s\geq 0}$ on the probability space above, starting from
$\h$ at $t=0$, according to the following rules. At each time
$t_{x,n}$ the site $x$ queries the state of its own constraint $c_x$
(see \eqref{eq:constraint}).
If and only if the constraint is satisfied ($c_x = 1$) then $t_{x,n}$
is called a \emph{legal ring} and the configuration resets its
value at site $x$ to the value of the corresponding Bernoulli variable
$s_{x,n}$. 

The above construction gives rise to an irreducible, continuous time Markov chain,
reversible w.r.t. $\pi$, 
with generator
\begin{align}
  \label{eq:gen}
  \cL_n f(\s) = \sum_{x\in\L_n} c_x(\s) [\p_x(f) - f(\s)]\,,
\end{align}
where $\p_x$ denotes the conditional mean $\p(f\mid \{\s_y\}_{y\neq
  x})$. Irreducibility follows because we can invade $\L_n$ with $0$'s
starting from the unconstrained corner $x_*$.

Using a standard percolation argument \cite{Liggett2,Durrett} together with the fact
that the constraints $\{c_x\}_{x\in \L_n}$ are uniformly bounded and of finite range, it
is not difficult to see that the graphical construction can be
extended  without problems also to the infinite volume case.

For an initial distribution $\nu$ at $t=0$ the law and expectation of the process will be denoted by
$\bbP_\nu$ and $\E_\nu$ respectively.
In the sequel, we will write $\nu_t$ for the distribution of the chain
at time $t$,
\[
\nu_t[\eta] := \bbP_\nu(\h(t) = \eta).
\] 
If
$\nu$ is concentrated on a single configuration $\eta$ we will write $\bbP_\eta(\cdot)$. 
Note that, for each $i=1,\dots, n$,  the same graphical construction can be used to define the
process on $\O^{(i)}$ whose law is denoted by $\bbP_{\nu}^{(i)}(\cdot)$.

It follows from the graphical construction, and the fact that the
constraints are oriented, that given an $i\in\{1\ldots,n\}$ the evolution in $\cU_i$
is not influenced by the evolution above $\cU_i$. In particular, for any $\h \in \O_n$
and any event $\cA$ in the $\s$-algebra generated by
$\{\h_x(s)\}_{s\leq t,x\in\cU_i}$, 
\begin{align}
  \label{eq:cons-1}
  \bbP_\h(\cA) = \bbP_{\h^{(i)}}^{(i)}(\cA)\,.
\end{align}
In fact the same holds for any subset $U\subset \L_n$ with monotone surface, \ie
$x-\be_i\in U$ whenever $x\in U$ and $x-\be_i\in \L_n$, for each $i\in\{1,\ldots,d\}$.
% \what
% The second observation is that, conditioned on the event that there has
% been at least one legal ring on each site of $\cH_i$, these sites are
% independently distributed according to $\pi_x$, $x\in\cH_i$. Let $\tau$ be the
% smallest time such that there has been at least one legal ring on each
% site in $\cH_i$, then 
% \begin{align}
%   \label{eq:cons-2}
%  \nu_t[\s_{\cH_i} \tc \tau < t] = \pi[ \s_{\cH_i} ]\,. 
% \end{align}

We finish this section with definitions of the spectral gap and
mixing time of the process.
\begin{definition}[spectral gap]
\label{gap}
The \emph{spectral gap}, $\l^{(n)}$, of the infinitesimal generator \eqref{eq:gen} is the smallest
positive eigenvalue of $-\cL_n$ , and is given by the variational
principle
\begin{align}
  \label{eq:5}
  \l^{(n)} : = \inf_{\substack{f:\O_n \to \bbR \\ f \neq \textrm{const}}}\frac{\cD_n(f)}{\Var_n(f)},
\end{align}
where $\cD_n(f)= -\pi\left(f\cL_n f\right)$ is the Dirichlet form of
the process.\end{definition}
\begin{definition}[Mixing time]
The mixing time is defined in the usual way as
\begin{align}
  \label{eq:Tmix}
  T_{\rm mix}^{(n)} = \inf\{t>0 : \sup_{\nu\in\cP(\O_n)}\left\Vert\nu_t - \pi
  \right\Vert_{\rm TV} \leq 1/4\}\,,
\end{align}
where $\Vert\cdot\Vert_{\rm TV}$ denotes the total variation distance.
\end{definition}
\begin{remark}
It follows from \cite[Theorem 4.1]{CMRT} 
that there exists $0<p_0<1$ such that $\inf_n \l^{(n)} > 0$ 
whenever $p < p_0$. For the North-East model $p_0$ coincides with the
oriented percolation threshold. Moreover, for any $p\in (0,1)$, one has
$1/\l^{(n)}\le T_{\rm
  mix}^{(n)}\le c\, n^d/\l^{(n)}$ (see e.g. 
\cite{Levin2008}).
\end{remark}
% We begin by establishing a connection between the mixing time and the
% mean hitting time of a certain set. From this it is clear that the
% conjecture in \cite{Kordzakhia:2006}, on the limiting shape of the
% influence region, implies that $\Tmix{n} \sim n\,$. 
% \begin{lemma}
% \label{equiv}
% Assume either $\cC_x=\cK^*_x\cap \L_n$ or  $\cC_x=\cK_x\cap \L_n$ and let $\1\in\O_n$ be
% the configuration with $1$'s on every site in $\L_n$. Define
% $\t^{(n)}$ as the first time the spin at 
% site $(n,\ldots,n)$ is $0$. Then there exist universal constants
% $c_1, c_2>0$
% such that
% \begin{align}
%   \label{eq:TmixThit}
% c_1 \bbE_{\1}[\tau^{(n)}] \le T_{\rm mix}^{(n)} \leq c_2 \bbE_{\1}[\tau^{(n)}]\,.
% \end{align}
% \end{lemma}
Our main result reads as follows.

\begin{theorem}
\label{th:main}
 Assume $\inf_n \lambda^{(n)} > 0$, then 
 \begin{align}
   C\, n \leq T_{\rm mix}^{(n)} \leq C'\, n \log n
 \end{align}
 for some $C,C'> 0$ independent of $n$.
\end{theorem}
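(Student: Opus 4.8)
The plan is to prove the two bounds separately. The lower bound $T_{\rm mix}^{(n)}\ge C n$ is the easy direction: one uses the oriented nature of the constraints to exhibit a slow "front". Starting from the configuration $\eta\equiv 1$, a site $x$ can only update after at least one site of $\cC_x\subset\cK_x^*$ has been set to $0$, and since $\cC_x\subset\cU_{(\sum_j x_j)-1}$ the spin at $x$ cannot flip before \emph{some} site on each hyperplane $\cH_1,\dots,\cH_{(\sum_j x_j)-1}$ has flipped. A standard first-passage / Poisson-thinning argument (compare the classical bound for the East model, \cite{Blondel:2012wb}) then shows that the top corner $x^*_{\rm top}=(n,\dots,n)$, which lies on hyperplane $\cH_{dn}$, has probability bounded away from $1$ of having flipped by any time $t\le C n$; since $\pi$ assigns $\eta_{x^*_{\rm top}}=0$ probability $q>0$, this keeps $\|\nu_t-\pi\|_{\rm TV}$ above $1/4$ for $t$ of order $n$. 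Concretely, let $\tau_i$ be the first time a legal ring occurs on $\cH_i$; the variables $\tau_i-\tau_{i-1}$ stochastically dominate i.i.d.\ exponentials of a fixed rate (the rate is at most the number of sites on a hyperplane times one, but one gets a cleaner bound by tracking a single propagating path), so $\tau_{dn}$ is of order $n$ with high probability.

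For the upper bound $T_{\rm mix}^{(n)}\le C' n\log n$ the idea is a layer-by-layer (hyperplane-by-hyperplane) analysis exploiting \eqref{eq:cons-1}: the marginal $\nu_t^{(i)}$ on $\cU_i$ evolves autonomously, as the dynamics restricted to $\cU_i$. I would set up an induction on $i$, showing that after a time $t_i = c\, i\log n$ (for a suitable constant $c$ depending on $\lambda:=\inf_n\lambda^{(n)}>0$) the marginal on $\cU_i$ is within total variation $\varepsilon_i$ of $\pi^{(i)}$, with $\varepsilon_i$ controlled so that $\varepsilon_{dn}\le 1/4$. The inductive step is the heart of the matter: given that $\nu^{(i-1)}_{t}$ is already close to equilibrium, one must show the extra hyperplane $\cH_i$ equilibrates in additional time $O(\log n)$. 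Here the uniform spectral gap enters through the standard relaxation bound: for the process on $\cU_i$ (reversible w.r.t.\ $\pi^{(i)}$),
\begin{align*}
  \big\|\nu_t^{(i)}-\pi^{(i)}\big\|_{\rm TV}^2 \le \tfrac14\,\chi^2\!\big(\nu_{t_0}^{(i)}\,\big\|\,\pi^{(i)}\big)\,e^{-2\lambda(t-t_0)},
\end{align*}
so once the $\chi^2$-distance is at most polynomial in $n$ — say $n^{C d}$, which holds after a short burn-in because $\pi^{(i)}$ puts mass at least $p^{|\cU_i|}\wedge q^{|\cU_i|}$ on each configuration and $|\cU_i|\le n^d$ — a further time $O_\lambda(\log n)$ brings it below $1/4$. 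The subtlety is that this naive argument gives $O(\log n)$ \emph{total}, not $O(n\log n)$; the linear factor $n$ must come from the fact that the "influence" of the unconstrained corner propagates only one hyperplane at a time, so one genuinely needs the telescoping over the $\Theta(n)$ hyperplanes rather than a single global relaxation estimate — the point being that the $\chi^2$ burn-in bound is only cheap \emph{after} the lower layers have relaxed, i.e.\ one cannot start the clock on layer $i$ until time $\sim t_{i-1}$.

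The main obstacle, and the step I expect to require the real work, is making the inductive coupling between consecutive layers precise: one must quantify how closeness of $\nu^{(i-1)}_t$ to $\pi^{(i-1)}$ feeds into the equilibration of the spins on $\cH_i$, since a site $x\in\cH_i$ sees constraints depending on $\cC_x\subset\cU_{i-1}$ whose law is only \emph{approximately} $\pi^{(i-1)}$. I would handle this by a coupling: run the true chain and a chain whose sub-$\cU_{i-1}$ part is started from $\pi^{(i-1)}$ using the same Poisson clocks and Bernoulli variables (the graphical construction), so the two agree on $\cU_{i-1}$ from time $0$ with probability $1-\|\nu^{(i-1)}\!-\!\pi^{(i-1)}\|_{\rm TV}$, and on that event the evolution of the $\cH_i$-spins is identical; combined with a union bound over the $O(n)$ layers and a geometrically-decaying choice of the $\varepsilon_i$, this yields $T_{\rm mix}^{(n)}=O(n\log n)$. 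A secondary technical point is the burn-in estimate controlling the $\chi^2$-distance after a short time, but this is routine given that the state space is finite and $\pi$ has density bounded below; the similarity with the plane-partition analysis of \cite{Caputo:2011dv} suggests this scheme goes through.
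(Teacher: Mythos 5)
Your overall architecture matches the paper's: the lower bound via finite speed of propagation along an oriented chain of legal rings, and for the upper bound a hyperplane-by-hyperplane induction that exploits the autonomy of the dynamics below $\cH_i$ (i.e.\ \eqref{eq:cons-1}), with a coupling that replaces the approximate marginal $\nu^{(i-1)}$ by the exact $\pi^{(i-1)}$ before analysing layer $i$ (this is the paper's auxiliary measure $\mu$). That part is all correct.

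The genuine gap is in the per-layer estimate, which you reduce to a $\chi^2$ / spectral-gap decay and a ``routine burn-in.'' The burn-in is the whole problem. The only bound you offer on the initial $\chi^2$-distance is $\chi^2(\nu\|\pi^{(i)})\le \bigl(\min(p,q)\bigr)^{-|\cU_i|}$, i.e.\ $e^{\Theta(n^{d})}$ (even after the coupling it is $e^{\Theta(|\cH_i|)}=e^{\Theta(n^{d-1})}$, since the conditional law of $\sigma_{\cH_i}$ given the lower layers is arbitrary). The decay $\chi^2_t\le e^{-2\lambda t}\chi^2_0$ then needs $t\gtrsim n^{d-1}/\lambda$ to reach $O(1)$, giving $O(n^{d})$ overall — the standard gap bound, not $O(n\log n)$. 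There is no mechanism in your sketch that makes the $\chi^2$-distance polynomial in $n$ ``after a short burn-in,'' and this is not a secondary technical point: it is exactly where a new idea is needed.

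The paper's mechanism (Lemma~\ref{lem:SingleDiag}) is different and does not pass through $\chi^2$ at all. With the lower layers exactly at $\pi^{(i-1)}$, conditionally on the history in $\cU_{i-1}$ the spins on $\cH_i$ evolve \emph{independently}, and each is refreshed to an exact $\pi_x$-sample at its \emph{first} legal ring. So the TV distance after time $t$ is controlled by $\bbP(\t^{(i)}\ge t)$, the probability that some site on $\cH_i$ has had no legal ring yet. This is bounded by $\sum_{x\in\cH_i}\Ep{\pi}{e^{-|\cG(x,t)|}}$, and the Feynman--Kac identity $\Ep{\pi}{e^{-|\cG(y,t)|}}=\langle\fc,e^{t(\cL-c_y)}\fc\rangle_\pi\le e^{t\beta_y}$, together with a spectral estimate showing $\beta_y\le -c_0(q)<0$ (this is where $\inf_n\l^{(n)}>0$ is actually used), yields an exponential tail with rate bounded away from zero. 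A union bound over $|\cH_i|=i^{d-1}$ sites then gives the $O(\log i)$ per-layer cost. The one-legal-ring-suffices observation and the Feynman--Kac control of $|\cG(y,t)|$ are the missing ideas; your spectral gap is used in the paper, but to bound $\beta_y$, not to run a global $\chi^2$ contraction.
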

\begin{remark}
In the case of maximal constraints, \ie $\cC_x=\cK_x^*\cap \L_n$, 
inserting the indicator of the
configuration identically equal to $1$ as a test function in the
logarithmic Sobolev inequality \cite{Saloff} shows that
the logarithmic Sobolev constant $\a^{(n)}$ of the generator \eqref{eq:gen} 
 is
$O(1/n^d)$. Therefore, if \[
T^{(n)}_2:=\inf\{t\ge 0: \ \sup_\nu
\var\left(\frac {\nu_t}{\pi}\right)\le 1/4\},
\]
then for a universal constants $c>0$ and a constant $c'>0$ depending
only on $p$, 
\[
T^{(n)}_2\ge \frac{c}{\a^{(n)}}\ge c' n^d,
\]
where the first inequality follows from \cite [Corollary
2.2.7]{Saloff}. This observation shows that oriented models in
$\bbZ^d$ can be quite different from oriented models on rooted $k$-regular
trees. In the latter case it was recently proved
\cite{Martinelli:2012tp} that both the mixing time $T_{\rm mix}^{(n)}$
\emph{and} $T^{(n)}_2$
grow linearly in the depth $n$ of the tree whenever the relaxation
time $1/\l^{(n)}$ is $O(1)$.   
\end{remark}

\section{Proof of theorem \ref{th:main}}
\begin{proof}[Proof of the lower bound]
The lower bound on the mixing time is quite straightforward using finite speed of
propagation. Define  $\t^*$ as the first time the spin at 
site $x^*:=(n,\ldots,n)$ is $0$, and denote the configuration of all
$1$'s by $\1$. Using results in \cite{O,PS} there exists $c>0$ such
that 
\begin{align}
  \label{eq:3}
\Tmix{n} \geq c \bbE_{\1}\bigl[\t^*\bigr]\geq  \frac{c\,n}{2}
\bbP_\1\bigl(\t^*\ge n/2\bigr).
\end{align}
Clearly the event $\t^*< n/2$ requires the existence of a
path $\g=\{x_*=x^{(0)}, x^{(1)}, \dots, x^{(\ell)}=x^*\}$ and times
$t_0<t_1<\dots <t_\ell<n/2$ such that, for any $i\le \ell$, $x^{(i)}\in
\cC_{x^{(i+1)}}$ and $t_i$ is a
  legal ring for $x^{(i)}$. Standard Poisson large deviations show
  that the probability of the above event is $o(1)$ as $n\to \infty$.  
\end{proof}

\begin{proof}[Proof of the upper bound]
The proof of the upper bound is based on an iterative scheme. For
$i\leq n$, we find some time, $\D_i=O(\log(i))$, such that if the initial measure
$\nu$ has marginal $\nu^{(i-1)}$ equal to $\p^{(i-1)}$ on $\O^{(i-1)}$ (\ie below the
hyperplane $\cH_i$), then after time $\D_i$ the
marginal $\nu^{(i)}$ is very close to the equilibrium marginal $\pi^{(i)}$. This is the content of Lemma
\ref{lem:SingleDiag}. Then, starting from an arbitrary
initial measure, we can iterate the above result using the triangle
inequality for the variation distance and propagate the error.

\begin{lemma}[Mixing time for a single diagonal.]
\label{lem:SingleDiag}
There exists $c=c(q,d) > 0$ such that, for any initial measure $\nu$ with marginal on $\O^{(i-1)}$ equal to $\pi^{(i-1)}$, 
 \begin{align*}
   \left\Vert \nu^{(i)}_t - \pi^{(i)} \right\Vert_{\rm TV} \leq \epsilon \quad \textrm{for all} \quad t \geq c\,\log
(i/\epsilon)\ .
 \end{align*}
% \[ 
% \sup_{\substack{f \in C_b(\cU_{n}) \\ |f|\leq 1}} \Big|
% \Ep{\nu}{f\big(\s_n(t)\big)} - \pi^{n}(f) \Big| \leq
% \epsilon \quad \textrm{for all} \quad t \geq \frac{1}{C(q,d)}\log
% (\epsilon^{-1}n)\ .
% \]
\end{lemma}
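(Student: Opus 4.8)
The plan is to exploit the oriented structure encoded in \eqref{eq:cons-1}: once the configuration below $\cH_i$ is already in equilibrium, the spins on $\cH_i$ evolve in a way that depends on $\cU_{i-1}$ only through the current equilibrium marginal there, so the dynamics restricted to $\cH_i$ is (after conditioning on the lower configuration) a collection of independent, or at least weakly coupled, single-site birth–death chains whose constraints are fed by an equilibrium environment. More precisely, I would first condition on the whole trajectory $\{\s_x(s)\}_{s\le t,\,x\in\cU_{i-1}}$; by the hypothesis $\nu^{(i-1)}=\pi^{(i-1)}$ and reversibility this trajectory is stationary, and for each site $x\in\cH_i$ the constraint $c_x$ is measurable with respect to it. Given this environment, the spin at $x$ is updated at the legal rings, which form a thinned Poisson process; between consecutive legal rings the spin is frozen, and at each legal ring it is refreshed to a fresh $\mathrm{Bernoulli}(p)$ value. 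Hence, conditionally, $\s_x(t)$ is at equilibrium as soon as $x$ has experienced at least one legal ring in $[0,t]$, and the different sites of $\cH_i$ are conditionally independent given the environment (their only coupling is through the shared lower trajectory). This reduces the problem to controlling the probability that \emph{some} site of $\cH_i$ has had no legal ring by time $t$.

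The key quantity is therefore $p_x(t):=\bbP\big(\text{no legal ring at }x\text{ in }[0,t]\big)$ for $x\in\cH_i$, with the environment started from $\pi^{(i-1)}$. I would bound the total variation distance by a union bound over $\cH_i$: conditionally on the environment the target marginal is a product over those $x$ that have rung, times the (stationary) product over the rest, and a short computation gives $\|\nu_t^{(i)}-\pi^{(i)}\|_{\rm TV}\le \sum_{x\in\cH_i}\bbP(\text{no legal ring at }x)$ — indeed, couple the chain with a stationary chain using the same rings and lower trajectory; they agree at $x$ once $x$ has rung once, and $|\cH_i|\le d\,i^{d-1}\le C i^{d}$. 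So it suffices to show $\bbP(\text{no legal ring at }x\text{ in }[0,t])\le e^{-ct}$ for a constant $c=c(q,d)>0$ uniform in $x$ and $i$, after which choosing $t\ge c^{-1}\log(C i^{d}/\e)$ finishes the proof with a renamed constant.

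The main obstacle is precisely the bound $\bbP(\text{no legal ring at }x)\le e^{-ct}$: it asserts that, in the stationary oriented KCM, the constraint $c_x$ is open for a positive fraction of time with exponentially small deviations, and this is where the assumption $\inf_n\lambda^{(n)}>0$ must enter. The route I would take is: the event ``$c_x(\s(s))=0$ for all $s\le t$'' forces the indicator random variable $\1\{c_x(\s(s))=1\}$ to vanish on $[0,t]$; since $\pi(c_x=1)=q^{|\cC_x|}\ge q^d>0$ is bounded below, the time-average $\frac1t\int_0^t \1\{c_x(\s(s))=1\}\,ds$ deviates from its mean by an $\Omega(1)$ amount, and a positive spectral gap yields an exponential (in $t$) bound on such an occupation-time large deviation — e.g. via the standard variational/Feynman–Kac estimate $\bbP_\pi\big(\int_0^t g(\s(s))\,ds \le 0\big)\le e^{-\lambda^{(n)} \pi(g)^2 t/(2\|g\|_\infty^2)}$ type inequality, applied to $g=\1\{c_x=1\}-\tfrac12 q^d$, or more simply the bound $\bbP_\pi(A_s\,\forall s\le t)\le e^{-\lambda^{(n)}(1-\pi(A))t}$ for any event $A$ with $\pi(A)<1$. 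Two technical points then need care: the initial law is $\pi^{(i-1)}$ on $\cU_{i-1}$ but unspecified on $\cH_{i-1}$ — however $\cC_x\subset\cK_x^*\cap\L_n\subset\cU_{i-1}$ for $x\in\cH_i$, so $c_x$ only sees $\cU_{i-1}$, and there the law is exactly stationary by \eqref{eq:cons-1}; and one should use the gap $\lambda^{(i-1)}$ of the process on $\cU_{i-1}$, which is $\ge\inf_n\lambda^{(n)}>0$ by monotonicity in the domain (a standard consequence of the variational principle \eqref{eq:5} since enlarging the set only adds nonnegative terms to the Dirichlet form while the variance of a function of $\cU_{i-1}$-spins is unchanged). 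Modulo these, the exponential occupation-time bound plus the union bound over $\cH_i$ gives the claimed $c\log(i/\e)$ mixing time for a single diagonal.
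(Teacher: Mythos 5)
Your proposal follows essentially the same route as the paper: condition on the (stationary) trajectory in $\cU_{i-1}$, observe that the spins on $\cH_i$ are conditionally independent and exactly at equilibrium after a single legal ring, union bound over $\cH_i$, and then control the probability of no legal ring via a Feynman--Kac / spectral-gap estimate for the occupation time $|\cG(x,t)|=\int_0^t c_x(\s(s))\,ds$. Two small caveats worth noting: the quantity you actually need is $\bbE_\pi\bigl[e^{-|\cG(x,t)|}\bigr]$, not $\bbP_\pi\bigl(c_x(\s(s))=0\ \forall s\le t\bigr)$ (the ``more simply'' alternative you offer bounds only the sub-event $|\cG|=0$, which is not enough on its own, though your occupation-time large-deviation bound does suffice after splitting on $\{|\cG|\le \tfrac12 q^d t\}$); and your monotonicity detour through $\lambda^{(\cU_{i-1})}$ is a valid but unnecessary precaution --- since $c_x$ depends only on $\cU_{i-1}$ and that sub-dynamics is autonomous, the Feynman--Kac bound can be run directly on $\L_n$ with $\lambda^{(n)}$, which is what the paper does.
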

Before proving Lemma \ref{lem:SingleDiag} let us recall a useful
characterisation of the total variation distance (see for example \cite{Levin2008}), 
\begin{align*}
  %\label{eq:TV}
  \left\Vert\nu - \pi  \right\Vert_{\rm TV} =
  \frac{1}{2}\sup_{\|f\|_\infty \leq 1}\left| \nu(f) - \pi(f) \right|\,,
\end{align*}
where $\|f\|_\infty$ denotes the sup-norm.

\begin{proof}[Proof of Lemma \ref{lem:SingleDiag}]
Fix $i\leq n$ and a function $f$ depending only on the spin configuration in
$\cU_i$ and such that $\|f\|_\infty\le 1$. Without loss of generality
assume $\pi(f) = 0$. Given an initial measure $\nu$ with marginal
$\pi^{(i-1)}$ on $\O^{(i-1)}$
% is $\pi^{(n-1)}$, that is $\s^{(n-1)}$ are initially distributed according to the stationary measure.
it follows from \eqref{eq:cons-1} that $\nu_t^{(i-1)}=\pi^{(i-1)}$ for all
$t\geq 0$. Moreover, conditioned on the history $\{
\sigma^{(i-1)}(s)\}_{s\leq t}$ in $\cU_{i-1}$, the spins
$\{\s_x(t)\}_{x\in \cH_i}$ on the
hyperplane $\cH_i$ evolve independently from each other. Each one goes
to equilibrium with rate one during the intervals of time in which its 
constraint $c_x$ is satisfied and stays fixed otherwise. 
In particular, conditioned on having had a legal ring at each $x\in \cH_i$ before time $t$, the
distribution of $\s_{\cH_i}(t)$ is $\pi$. 
These simple observations gives rise to an upper-bound on
the expectation of $f$ at time $t$ as follows.

Let $\t^{(i)}$ be the first time there has been at least one legal ring
on each $x\in \cH_i$. Following the argument above we have,
\begin{align*}
\left\vert\Ep{\nu}{f\left(\sigma^{(i)}(t)\right)}\right\vert =&
\left\vert\ \bbE_\nu\left(\Ep{\nu}{f\left(\sigma^{(i)}(t)\right)\mid \big\{
\sigma^{(i-1)}(s)\big\}_{s\leq t}}\right) \right\vert\\
\leq& \left\vert\ \bbE_\nu\left(\Ep{\nu}{f\left(\sigma^{(i)}(t)\right)  \1_{\{\t^i<t\}}\mid \big\{
\sigma^{(i-1)}(s)\big\}_{s\leq t}} \right)\right\vert +\\
&+\left\vert\ \bbE_\nu\left(\Ep{\nu}{f\left(\sigma^{(i)}(t)\right)
      \1_{\{\t^i\ge t\}}\mid \big\{
\sigma^{(i-1)}(s)\big\}_{s\leq t}}\right)\right\vert\\
\leq&  \left\vert\ \bbE_\nu\left(
    \pi_{\cH_{i}}\left(f\right) \bbP_\nu\left(\t^i<t \mid
      \big\{\sigma^{(i-1)}(s)\big\}_{s\leq t}\right)\right)\right\vert +\\
&+  \left\vert\ \bbE_\nu\left(\Ep{\nu}{f\left(\sigma^{(i)}(t)\right)
      \1_{\{\t^i\ge t\}}\mid \big\{
\sigma^{(i-1)}(s)\big\}_{s\leq t}}\right) \right\vert\\
\leq & 2\Ep{\pi}{ \bbP_\nu\left(\t^i\ge t\mid \big\{
\sigma^{(i-1)}(s)\big\}_{s\leq t}\right)}.
\end{align*}
In the second inequality
above we used the strong Markov property
together with the observation that the distribution of $\s_{\cH_i}(t)$
conditioned on $\{\t^{(i)}<t\}$ and $\big\{
\sigma^{(i-1)}(s)\big\}_{s\leq t}$ is $\pi$.  In the third
inequality  we used the fact that $\bbE_\nu\left(\pi_{\cH_{i}}(f)\right)=\pi(f)=0$.

To bound the final term above we denote the number of rings of the
Poisson clock at site $x$ during the
 set $B\subset [0,t]$ by
$N_x(B) =\lvert \{t_{x,k}\}_{k\geq 0}\cap B \rvert$, and we define the
set of \emph{legal times} at site $x$ before $t$ as $\cG(x,t) := \{ s < t :
c_x(\sigma(s)) =1 \}$. By construction, for $x\in\cH_i$ the set $\cG(x,t)$  depends only on $\left\{
\sigma_y(s)\right\}_{s\leq t},\ y\in \cC_x$. 
Using the notation above,
\[
\bbP_\nu\left(\t^{(i)}\ge t\mid \big\{\sigma^{(i-1)}(s)\big\}_{s\leq
    t}\right) \leq 
\sum_{x\in\cH_i}\bbP\left( N_x(\cG(x,t)) = 0 \mid \big\{
\sigma^{(i-1)}(s)\big\}_{s\leq t}\right).
\]
By construction $N_x(\cG(x,t))$ is a Poisson random variable of mean
$ |\cG(x,t)|$. Thus
\[
\sum_{x\in\cH_i}\bbP\left( N_x(\cG(x,t)) = 0 \mid \big\{
\sigma^{(i-1)}(s)\big\}_{s\leq t}\right)= \sum_{x\in\cH_i}e^{-|\cG(x,t)|}.
\]
In conclusion, 
\begin{align*}
\left\vert \Ep{\nu}{f\left(\s^{(i)}(t)\right)} \right\vert &\leq
2\sum_{x\in\cH_i}\Ep{\pi}{e^{-|\cG(x,t)|} }\leq 2 i^{d-1}
\max_{y\in \L_n}\Ep{\pi}{e^{-|\cG(y,t)|}}.
\end{align*}
We estimate $\Ep{\pi}{e^{-|\cG(y,t)|}} $ using a Feynman-Kac approach (a similar
method has been used to bound the persistence function in \cite{CMRT}).
The total time that site $y$ satisfies its constraints is 
\begin{align*}
  |\cG(y,t)| = \int_0^t c_y\left(\s(s)\right)ds.
\end{align*}
On $L^2(\pi)$ we define the self adjoint operator $H := \cL - V$ with $V(\sigma):=c_y(\sigma )$.
The Feynman-Kac formula allows us to rewrite the expectation
$\Ep{\pi}{e^{-\int_{0}^{t} V}}$  as
$\left\langle\mathbf{1},e^{tH}\mathbf{1} \right\rangle_\p$ (where
$\left\langle\cdot,\cdot\right\rangle_\p$ denotes the
inner-product in $L^2(\pi)$).
Thus, if $\beta_y$ is the supremum of the spectrum of $H$ we have
\begin{align}
\label{eq:spec}
\Ep{\pi}{e^{-|\cG(y,t)|} }\leq e^{t\beta_y}\ .
\end{align}
In order to complete the proof of the Lemma it remains to show that $\beta_y < 0$.
% \begin{align}
% \beta \leq \sup_{\left\|f \right\| \leq 1} \left\langle f,Hf \right\rangle\ .
% \nonumber
% \end{align}

We decompose each $L^2(\pi)$-norm one function $\phi$ in the domain of $H$ as $ \phi=
\alpha \mathbf{1} + g$, for some mean zero function $g$.
So $\left\langle \mathbf{1},g\right\rangle_\p = 0$ and $\alpha^2 +
\langle g,g\rangle_\p = 1$. Then,
\begin{align}
\label{eq:H}
  \left\langle \phi,H \phi \right\rangle = \left\langle g, \cL g \right\rangle_\p -
\alpha^2\left\langle \mathbf{1},V\fc\right\rangle_\p  - 2\alpha \left\langle \mathbf{1},V g
\right\rangle_\p - \left\langle g,V g \right\rangle_\p
\end{align}
We proceed by bounding $\left\langle \phi,H \phi\right\rangle_\p$.
Using \eqref{eq:H} together with the Cauchy-Schwarz inequality we get
\begin{align*}
\left\langle \phi,H \phi \right\rangle &\leq -\l^{(n)}\, \langle g, g\rangle_\p -
q^d\Big( \alpha^2 + 2 \bbE_\p\left(g\mid V = 1\right)\alpha  + \bbE_\p\left(g^2 \mid V = 1\right) \Big) \\
&\leq  -\delta\,\l^{(n)} -q^d \Big(|\a|-\bbE_\p\left(g^2 \mid V =
  1\right)^{1/2}\Big)^2\\
 &\leq  -\delta\,\l^{(n)}
\end{align*}
where $\d=1-\a^2$. 
Again from \eqref{eq:H}, dropping the
last term on the right hand side and applying the Cauchy-Schwarz
inequality we get
\begin{align*}
\left\langle \phi,H \phi \right\rangle &\leq -\l^{(n)}\ \langle g, g\rangle_\p - \alpha^2 q^d  + 2
\vert \alpha \vert \left(\langle g , g \rangle_\p
q^d(1-q^d)\right)^{1/2}\\
&\leq  (\d -1 )q^d + 2\left(\delta
q^d(1-q^d)\right)^{1/2} .
\end{align*} 
% \leq  -\frac{q^d}{2}\quad\textrm{if}\quad \d \leq \frac{2(1-\sqrt{1-q^d})-q^d}{4q^d}\,. Note: Can put the variance of V because g is centered 
Thus 
\[
\beta_y \le -c_0(q):=\min\left(-\delta\,\l^{(n)}, (\d -1 )q^d + 2\left(\delta
q^d(1-q^d)\right)^{1/2} \right).
\]
Finally we observe that $c_0(q)>0$ because 
\[
(\d -1 )q^d + 2\left(\delta
q^d(1-q^d)\right)^{1/2} \le  -\frac{q^d}{2}\quad\textrm{if}\quad \d
\leq \frac{2(1-\sqrt{1-q^d})-q^d}{4q^d}.
\]
In conclusion 
\begin{align*}
\left\vert \Ep{\nu}{f\left(\s^{(i)}(t)\right)} \right\vert
\leq 2 i^d e^{-c_0(q) t}\le \epsilon,
\end{align*}
for $t\geq c \log(i/\epsilon)$ for some $c=c(d,q)$.
\end{proof}

% \subsection{Mixing on $\Lambda_L$ by induction}

% \begin{theorem}
% \label{thm:TLMix}
% The mixing time on a box of linear size $L$ with ergodic boundary conditions is bounded below by $O(L)$ and above by $O(L \log L)$.
% For all $\epsilon > 0$ and any initial distribution $\nu$ on $\{0,1\}^{\Lambda_L}$,
% \[
% \sup_{\substack{f\in C_b(\Lambda_L) \\ |f| \leq 1}}\left\vert
% \Ep{\nu}{f(\ \s^{(L)}(t)\ )} - \mu_L(f) \right\vert \leq \epsilon \quad
% \textrm{for all} \quad t \geq \frac{3}{C(q)} L \left( \log \left(
% {\epsilon}^{-1/3} L \right)- 1 \right)
% \]
% and starting from the blocked configuration of all ones $\s_b = (1,1\ldots,1)$
% \[
% \left\vert \Ep{\s_b}{ \ind_{\left\{\sigma_{1,1} = 1\right\}} } - \mu_L\left( \ind_{\left\{\sigma_{1,1} = 1\right\}} \right)  \right\vert \geq \epsilon \quad \textrm{for all} \quad t \leq L\ .
% \] 
% \begin{remark}
% Again we may as well start from all ones on $\Lambda_L$.
% \end{remark}
% \end{theorem}

We now prove Theorem \ref{th:main} by iterating the previous Lemma and
propagating the error.
We use the following iterative scheme
\begin{align}
  t_{i} = t_{i-1} + \D_i, \quad t_0=0\,,
\end{align}
where $\D_i(\epsilon) = \frac{1}{c} \log(i^3/\epsilon)$ with $c$ as in the
Lemma.
% By the previous Lemma,
% % Lemma, for $f \in C_b(\cU_n)$ with $\mu_{n}(f) = 0$ and
% % $|f|\leq 1$ and 
% for any initial distribution $\mu$ with marginal $\mu^{(i-1)} = \pi^{(i-1)}$
% %  with marginal $\mu_{n-1}$ on $\cU_{n-1}$, that is
% % \[
% % \sum_{\tilde\s^{n}}\nu \otimes
% % \mu_{n-1}[\tilde\s^{(n)},\s^{(n-1)}] = \mu_{n-1}[\s^{(n-1)}]\ ,
% % \]
% we have
% \begin{align}
%   \label{eq:2}
%   \left\Vert \mu^i_t - \p^i \right\Vert_{\rm TV} \leq
% \frac{\epsilon}{i^2} \quad \textrm{for all} \quad  t \geq  \D_i\ .
% \end{align}
We now show by induction that starting from an arbitrary initial distribution $\nu$ 
\begin{align}
  \label{eq:H0}
 2 \left\Vert \nu^{(i)}_t - \p^{(i)} \right\Vert_{\rm TV} \leq \hat\epsilon_{i}
:=  \epsilon\sum_{j=1}^{i} \frac{1}{j^2}\quad \textrm{for all} \quad  t
\geq  t_{i}\ .
\end{align}
The case $i=1$ \eqref{eq:H0} is an immediate consequence of the fact that
the corner $x_*=(1,\dots,1)$ goes to equilibrium with rate one.
Assume \eqref{eq:H0} holds for all $i \leq k-1$. It is clear that
$\min_\s\nu_t(\s)>0$ for any $t>0$, so that we may define an auxiliary
probability measure $\mu$ with marginal $\mu^{(k-1)}=\pi^{(k-1)}$ by
\[
\mu[\s] := \pi^{(k-1)}\left[\s^{(k-1)}\right]\, \nu_{t_{k-1}}\left[\s \tc \s^{(k-1)}
\right].
\]
Fix a function $f$ depending  only on the configuration in
$\cU_{k}$ with $\|f\|_{\infty}\le 1$. Again, without loss of generality,
assume $\pi(f) = 0$. Then 
\begin{align}
  \label{eq:1}
  \Big\vert \Ep{\nu}{f\left(\s(t_{k})\right)} \Big\vert &=
  \Big\vert \Ep{\nu_{t_{k-1}}}{f\left(\s(\D_{k})\right)} \Big\vert\nonumber\\
  &\leq \Big\vert \Ep{\nu_{t_{k-1}}}{f\left(\s(\D_{k})\right)} -
    \Ep{\mu}{f\left(\s(\D_{k})\right)}\Big\vert + \frac{\e}{k^2}\,.
\end{align}
where we used the Markov property in the first line and Lemma
\ref{lem:SingleDiag} together with the triangle inequality in the
second line.
The two measures $\mu$ and $\nu_{t_{k-1}}$ have the same marginal on
$\cH_k$, so we may reduce the remaining term to something that can be
dealt with using the inductive hypothesis as follows. For any $\eta\in \O^{(k-1)}$
let
\begin{align*}
  F\left(\eta\right) := \Ep{\nu_{t_{k-1}}}{f\left(\s(\D_{k})\right)\tc \s^{(k-1)}=\eta
  }.
\end{align*}
Clearly $\Vert F \Vert_{\infty} \leq 1$, so by the definition of
$\mu$ and the
inductive hypothesis \eqref{eq:H0},
\begin{align*}
  \Big\vert \Ep{\nu_{t_{k-1}}}{f\left(\s(\D_{k})\right)} -
    \Ep{\mu}{f\left(\s(\D_{k})\right)}\Big\vert = \left| \nu_{t_{k-1}}
    \left( F \right) - \pi\left( F \right) \right|\le \hat\e_{k-1}\,.
%\lvert     \int_{\cU_{k-1}}F\left(\s^{(k-1)}\right)d\nu^{k-1}_{t_{k-1}} - \int_{\cU_{k-1}}F\left(\s^{(k-1)}\right)d\pi^{k-1}\rvert
\end{align*}
In conclusion the r.h.s. of \eqref{eq:1} is bounded from above by 
\begin{align*}
  %\label{eq:2}
  \Big\vert \Ep{\nu}{f\left(\s(t_{k})\right)} \Big\vert &=
  \hat\e_{k-1}+ \frac{\e}{k^2} = \hat\e_k\,.
\end{align*}
Since $\cU_{2n-1} \supseteq \L_n$, it follows that
\begin{align*}
2  \left\Vert \nu_t - \p \right\Vert_{\rm TV} \leq \hat\epsilon_{2n-1}
=  \epsilon\sum_{j=1}^{2n-1} \frac{1}{j^2} \leq 2 \e \quad \textrm{for
  all} \quad t \geq t_{2n-1}
\end{align*}
and
\begin{align*}
t_{2n-1} = \sum_{j=1}^{2n-1}\D_j \leq \frac{1}{c} \int_{0}^{2n} \log(
x^3/\epsilon) d x \le c'n\log(n)
\end{align*}
for some $c'>0$.
\end{proof}

\section{Acknowledgments} We would like to thank O. Blondel for a very
careful reading of the paper.

\bibliography{NE}
%\bibliography{general}{}
\bibliographystyle{plain}
% \bib, bibdiv, biblist are defined by the amsrefs package.
%\begin{bibdiv}
%\begin{biblist}

%\end{biblist}
%\end{bibdiv}

\end{document}